\newtheorem{theorem}{Theorem}[section]
\newtheorem{lemma}[theorem]{Lemma}
\theoremstyle{definition}
\theoremstyle{remark}
\newtheorem{remark}[theorem]{Remark}
\numberwithin{equation}{section}
\begin{document}
\setcounter{page}{1}

\title[ $L^\infty$-$\textnormal{BMO}$ bounds for  Hermite pseudo-multipliers ]{$L^\infty$-$\textnormal{BMO}$ bounds for   pseudo-multipliers associated with the harmonic oscillator}

	\author[D. Cardona]{Duv\'an Cardona}
	\address{
		Duv\'an Cardona:
		\endgraf
		Department of Mathematics: Analysis Logic and Discrete Mathematics
		\endgraf
		Ghent University
		\endgraf
		Ghent-Belgium
		\endgraf
		{\it E-mail address} {\rm duvanc306@gmail.com}
		}

\subjclass[2010]{81Q10.}

\keywords{Harmonic oscillator, pseudo-multiplier, Hermite expansion, Littlewood-Paley theory, BMO}

\begin{abstract} In this note we  investigate some conditions of  H\"ormander-Mihlin type in order to assure   the  $L^\infty$-${\textnormal{BMO}}$ boundedness for pseudo-multipliers  of the harmonic oscillator. The $\textnormal{H}^1$-$L^1$  continuity for Hermite multipliers  also is investigated.
{\it The final version of this paper will appear in Rev. Colombiana Mat.}

\end{abstract} \maketitle
\tableofcontents

\section{Introduction} 
The aim of this paper is to investigate the boundedness from  $L^\infty(\mathbb{R}^n)$ into $\textnormal{BMO}(\mathbb{R}^n)$  for pseudo-multipliers associated with the harmonic oscillator (see e.g. S. Thangavelu \cite{Thangavelu,Thangavelu2}). As it was observed by M. Ruzhansky in \cite{CardonaRuzhansky2018}, from the point of view of the theory of pseudo-differential operators, pseudo-multipliers would be the special case of the symbolic calculus developed in M. Ruzhansky and N. Tokmagambetov  \cite{ProfRuzM:TokN:20016,ProfRuzM:TokN:20017} (see also Remark \ref{pseudodifferntial}).
Let us consider the (Hermite operator) quantum harmonic oscillator $H:=-\Delta_x+|x|^2,$ (where $\Delta_x$ is the standard Laplacian) which extends to an unbounded self-adjoint operator on $L^2(\mathbb{R}^n).$ It is a well known fact, that the Hermite functions\footnote{Each Hermite function  $\phi_{\nu}$  has the form
$
\phi_\nu:=\Pi_{j=1}^n\phi_{\nu_j},\,\,\,$ $\phi_{\nu_j}(x_j)=(2^{\nu_j}\nu_j!\sqrt{\pi})^{-\frac{1}{2}}H_{\nu_j}(x_j)e^{-\frac{1}{2}x_j^2},
$
where $x\in\mathbb{R}^n$, $\nu\in\mathbb{N}^n_0,$ and  $H_{\nu_j}(x_j):=(-1)^{\nu_j}e^{x_j^2}\frac{d^k}{dx_{j}^k}(e^{-x_j^2}) $ denotes the Hermite polynomial of order $\nu_j.$ }   $\phi_\nu,$ $\nu\in \mathbb{N}_0^n,$ are the $L^2$-eigenfunctions of $H,$ with corresponding eigenvalues  satisfying: $H\phi_\nu=(2|\nu|+n)\phi_\nu.
$ The system $\{\phi_\nu\}_{\nu\in \mathbb{N}_0^n}$, which  is a subset of the Schwartz class $ \mathscr{S}(\mathbb{R}^n),$  provides an orthonormal basis of $L^2(\mathbb{R}^n).$ So, the spectral theorem for unbounded operators implies that
\begin{equation}
    Hf(x)=\sum_{\nu\in \mathbb{N}_0^n}(2|\nu|+n)\widehat{f}(\phi_\nu),\,\,f\in \textnormal{Dom}(H),
\end{equation} where $\widehat{f}(\phi_\nu)$ is the Fourier-Hermite transform of $f$ at $\phi_\nu,$ which is given by
\begin{equation}
    \widehat{f}(\phi_\nu)=\int\limits_{\mathbb{R}^n}f(x)\phi_\nu(x)dx.
\end{equation}  If $G\subset\mathbb{R}^n$ is  the complement of a subset of zero Lebesgue measure in $\mathbb{R}^n$, the pseudo-multiplier associated with a function $m:G\times \mathbb{N}_0^n\rightarrow \mathbb{C}$ is defined by
\begin{equation}
    Af(x)=\sum_{\nu\in \mathbb{N}_{0}^n}m(x,\nu)\widehat{f}(\phi_\nu)\phi_\nu(x),\,\,\,x\in G,\,f\in\textnormal{Dom}(A).
\end{equation} 
In this sense we say that $A$ is the pseudo-multiplier associated to the function $m,$ and that $m$ is the  symbol of $A.$ In this paper the main goal  is to give conditions on $m$ in order that $A$ can be extended to a bounded operator from $L^\infty$ to $\textnormal{BMO}.$  The problem of the boundedness of pseudo-multipliers is an interesting topic in harmonic analysis (see e.g. J. Epperson\cite{Epperson}, S. Bagchi and S. Thangavelu\cite{BagchiThangavelu},  D. Cardona and M. Ruzhansky \cite{CardonaRuzhansky2018} and references therein). The problem was initially considered for multipliers of the harmonic oscillator
\begin{equation}
    Af(x)=\sum_{\nu\in \mathbb{N}_{0}^n}m(\nu)\widehat{f}(\phi_\nu)\phi_\nu(x),\,\,f\in \textnormal{Dom}(A).\footnote{  $\textnormal{Dom}(A)=\{f\in L^2(\mathbb{R}^n) : \sum_{\nu\in \mathbb{N}_{0}^n}\vert m(\nu)\widehat{f}(\phi_\nu)\vert^2<\infty \} $ is a dense subset of $L^2(\mathbb{R}^n)$. Indeed, note that $\{\phi_\nu\}_\nu\subset \textnormal{Dom}(A),$ and consequently $L^2(\mathbb{R}^n)=\overline{\textnormal{span}(\{\phi_\nu\}_\nu)}\subset \overline{\textnormal{Dom}(A)}.$}
\end{equation} 
Indeed, an early result due to S. Thangavelu (see \cite{thangavelu0,Thangavelu}) states that if $m$ satisfies the following discrete Marcienkiewicz condition
\begin{equation}\label{thangavelucondition}
|\Delta_\nu^{\alpha} m(\nu)|\leq C_{\alpha}(1+|\nu|)^{-|\alpha|},\,\,\alpha\in\mathbb{N}^n_0,\,|\alpha|\leq [\frac{n}{2}]+1,
\end{equation}
where $\Delta_\nu$ is the usual difference operator, then the corresponding multiplier $T_m:L^p(\mathbb{R}^n)\rightarrow L^p(\mathbb{R}^n)$ extends to a bounded operator for all $1<p<\infty.$ In view of Theorem 1.1 of S. Blunck \cite{Blunk}, (see also P. Chen, E. M. Ouhabaz, A. Sikora, and  L. Yan, \cite[p. 273]{COSY}), if we restrict our attention to spectral multipliers $A=m(H),$ the boundedness on $L^p(\mathbb{R}^n),$ can be assured if $m$
satisfies the H\"ormander condition of order $s,$
\begin{equation}\label{hormandercondition}
\Vert m\Vert_{l.u.H^s}:=\sup_{r>0}\Vert m(r\cdot)\eta(|\cdot|) \Vert_{H^s(\mathbb{R}^n)}=\sup_{r>0}r^{s-\frac{n}{2}}\Vert m(\cdot)\eta(r^{-1}|\cdot|) \Vert_{H^s(\mathbb{R}^n)}<\infty,\,\, \,\,
\end{equation}
where $\eta\in \mathscr{D}(0,\infty)$ and  $s>\frac{n+1}{2},$ for all $p\in [p_0,\frac{p_0}{p_0-1}],$ for some $p_0\in (1,2).$ If $|\nu|=\nu_1+\cdots +\nu_n,$ for spectral pseudo-multipliers
\begin{equation}
      Ef(x)=\sum_{\nu\in \mathbb{N}_{0}^n}m(x,2|\nu|+n)\widehat{f}(\phi_\nu)\phi_\nu(x),\,\,f\in \textnormal{Dom}(E),
\end{equation}    under one of the following conditions
\begin{itemize}
    \item J. Epperson, \cite{Epperson}: $n=1,$ $E$ bounded on $L^2(\mathbb{R})$ and 
    \begin{equation}
        |\Delta^\gamma_\nu m(x,2\nu+1)|\leq C_\gamma(2\nu+1)^{-\gamma},\,\,\,0\leq \gamma\leq 5, 
    \end{equation}
    \item S. Bagchi and S. Thangavelu, \cite{BagchiThangavelu}: $n\geq 2,$ $E$ bounded on $L^2(\mathbb{R}^n)$ and 
    \begin{equation}
        |\Delta^\gamma_\nu m(x,2|\nu|+1)|\leq C_\gamma(2|\nu|+1)^{-\gamma},\,\,\,0\leq |\gamma|\leq n+1, 
    \end{equation} 
\end{itemize} the operator $E$ extends to an operator of weak type $(1,1).$ This means that $E:L^{1}(\mathbb{R}^n)\rightarrow L^{1,\infty}(\mathbb{R}^n)$ admits a bounded extension (we denote by   $L^{1,\infty}(\mathbb{R}^n)$ the the weak $L^1$-space\footnote{which consists of those functions $f$ such that $\Vert f \Vert_{L^{1,\infty}}=\sup_{\lambda>0}\lambda\cdot  \textnormal{meas}(\{x\in \mathbb{R}^n:|f(x)|>\lambda\})<\infty.$}). In view of the Marcinkiewicz  interpolation Theorem  it follows that $E$ extends to a  bounded linear operator on $L^p(\mathbb{R}^n),$ for all $1< p\leq 2.$

We can note that in the previous results the $L^2$-boundedness of pseudo-multipliers is assumed.  The problem of finding reasonable conditions for the $L^2$-boundedness of spectral  pseudo-multipliers, was proposed by S. Bagchi and S. Thangavelu in \cite{BagchiThangavelu}. To solve this problem,  it was considered in  \cite{CardonaRuzhansky2018}, the following H\"ormander conditions,
\begin{equation}\label{primeranorma}
\Vert m \Vert_{l.u., H^s}:=\sup_{r>0,\,y\in\mathbb{R}^n} \,r^{(s-\frac{n}{2})}\Vert  \langle x\rangle^{s}\mathscr{F}[m(y,\cdot)\psi(r^{-1} |\cdot|)](x)\Vert_{L^2({\mathbb{R}}^n_x)}<\infty,
\end{equation}
\begin{equation}\label{segundanorma}
\Vert m\Vert_{l.u., \mathcal{H}^s}:=\sup_{k>0}\sup_{y\in\mathbb{R}^n} \,{2}^{k(s-\frac{n}{2})}\Vert \langle x \rangle^s \mathscr{F}^{-1}_H[m(y,\cdot)\psi(2^{-k}|\cdot|)](x)\Vert_{L^2(\mathbb{R}^n_x)}<\infty,
\end{equation}
defined by the Fourier transform $\mathscr{F}$ and the inverse Fourier-Hermite transform $\mathscr{F}_{H}^{-1}$. More precisely, the H\"ormander condition \eqref{primeranorma} of order $s>\frac{3n}{2},$ uniformly in $y\in \mathbb{R}^n,$ or the  condition \eqref{segundanorma} for $s>\frac{3n}{2}-\frac{1}{12},$ uniformly in $y\in\mathbb{R}^n,$ guarantee the $L^2$- boundedness of the pseudo-multiplier \eqref{pseudomultiplier''}. As it was pointed out in \cite{CardonaRuzhansky2018}, in \eqref{primeranorma} we consider functions $m$ on $\mathbb{R}^n\times \mathbb{R}^n,$ but to these functions we associate a pseudo-multiplier with symbol $\{m(x,\nu)\}_{x\in\mathbb{R}^n,\nu\in\mathbb{N}_0^n}.$
On the other hand, (see Corollary 2.3 of \cite{CardonaRuzhansky2018}) if we assume the condition,
\begin{equation}\label{DiscreteconditionIntro}
|\Delta_\nu^{\alpha} m(x,\nu)|\leq C_{\alpha}(1+|\nu|)^{-|\alpha|},\,\,\alpha\in\mathbb{N}^n_0,\,|\alpha|\leq \rho,
\end{equation}for $\rho=[3n/2]+1,$ then the pseudo-multiplier in  \eqref{pseudomultiplier''} extends to a bounded operator on $L^2(\mathbb{R}^n)$, and for $\rho=2n+1$ we have its $L^p(\mathbb{R}^n)$-boundedness  for all $1<p<\infty.$ Now, we record the main theorem of \cite{CardonaRuzhansky2018}:
\begin{theorem}\label{maintheoremCardonaRuzhansky}
Let us assume that $2\leq p<\infty.$ If $A=T_m$ is a pseudo-multiplier with symbol $m$ satisfying  \eqref{primeranorma}, then  under one of the following conditions,
\begin{itemize}
\item  $n\geq 2,$  $2\leq p<\frac{2(n+3)}{n+1},$ and $s>s_{n,p}:=\frac{3n}{2}+{\frac{n-1}{2}(\frac{1}{2}-\frac{1}{p})},$
\item $n\geq 2,$ $p=\frac{2(n+3)}{n+1},$ and $s>s_{n,p}:=\frac{3n}{2}+\frac{n-1}{2(n+3)},$
\item $n\geq 2,$ $\frac{2(n+3)}{n+1}<p\leq \frac{2n}{n-2},$ and  $s>s_{n,p}:=\frac{3n}{2}{-\frac{1}{6}+\frac{2n}{3}(\frac{1}{2}-\frac{1}{p})},$
\item $n\geq 2,$ $\frac{2n}{n-2}\leq p<\infty,$ and  $s>s_{n,p}:=\frac{3n-1}{2}{+n(\frac{1}{2}-\frac{1}{p})},$
\item $n=1,$ $2\leq p<4,$ $s>s_{1,p}:=\frac{3}{2},$
\item $n=1,$ $p=4,$ $s>s_{1,4}:=2,$
\item $n=1,$ $4<p<\infty,$  $s>s_{1,p}:=\frac{4}{3}{+\frac{2}{3}(\frac{1}{2}-\frac{1}{p})},$
\end{itemize}
the operator $T_m$ extends to a bounded operator on $L^p(\mathbb{R}^n).$ For $1<p\leq 2,$ under one of the following  conditions
\begin{itemize}
\item  $n\geq 2,$  $\frac{2(n+3)}{n+5}\leq p\leq 2,$ and $s>s_{n,p}:=\frac{3n}{2}+{\frac{n-1}{2}(\frac{1}{2}-\frac{1}{p})},$
\item $n\geq 2,$ $\frac{2n}{n+2}\leq p\leq \frac{2(n+3)}{n+5},$ and  $s>s_{n,p}:=\frac{3n}{2}{-\frac{1}{6}+\frac{2n}{3}(\frac{1}{2}-\frac{1}{p})},$
\item $n\geq 2,$ $1< p\leq \frac{2n}{n+2},$ and  $s>s_{n,p}:=\frac{3n-1}{2}{+n(\frac{1}{2}-\frac{1}{p})},$
\item $n=1,$ $\frac{4}{3}\leq p<2,$ $s>s_{1,p}:=\frac{3}{2},$
\item $n=1,$ $1<p<\frac{4}{3},$  $s>s_{1,p}:=\frac{4}{3}{+\frac{2}{3}(\frac{1}{2}-\frac{1}{p})},$
\end{itemize} the operator $T_m$ extends to a bounded operator on $L^p(\mathbb{R}^n)$. However, in general:
\begin{itemize} \item for every $\frac{4}{3}<p<4$ and every $n,$ the condition  $s>\frac{3n}{2}$ implies the $L^p$-boundedness of $T_m.$  \end{itemize} If the symbol $m$ of the pseudo-multiplier $T_m$ satisfies the H\"ormander condition \eqref{segundanorma}, in order to guarantee the $L^p$-boundedness of $T_m,$  in every case above we can take $s>s_{n,p}-\frac{1}{12}.$  Moreover, the condition $s>\frac{3n}{2}-\frac{1}{12}$ implies the $L^p$-boundedness of $T_m$ for all $\frac{4}{3}<p<4.$
\end{theorem}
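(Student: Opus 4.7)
The plan is to reduce the $L^p$-boundedness to dyadic estimates on spectral annuli and then sum via Littlewood-Paley theory. I would begin by fixing a smooth partition $\{\psi(2^{-k}\cdot)\}_{k\geq 0}$ of the positive reals adapted to the Hermite spectrum, so that
\begin{equation*}
m(x,\nu)=\sum_{k\geq 0} m_k(x,\nu),\qquad m_k(x,\nu):=m(x,\nu)\psi(2^{-k}|\nu|),
\end{equation*}
producing a decomposition $T_m=\sum_k T_{m_k}$ in which each $T_{m_k}$ is essentially localized in the spectral window $|\nu|\sim 2^k$. The $L^2$-theory recorded earlier in the excerpt controls each piece by roughly $2^{-k(s-3n/2)}\Vert m\Vert_{l.u.,H^s}$, so the crux of the matter is upgrading this to an $L^p$-bound of the form $2^{-k(s-s_{n,p})}\Vert m\Vert_{l.u.,H^s}$, after which summation in $k$ is immediate whenever $s>s_{n,p}$.

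The key input for this upgrade is the family of sharp $L^2\to L^p$ estimates for the Hermite spectral projectors $P_{k}:=\sum_{2|\nu|+n\in[2k,2k+2)}\langle\cdot,\phi_\nu\rangle\phi_\nu$, which are known to grow like $2^{k\sigma(n,p)/2}$ for an exponent $\sigma(n,p)$ that changes regime precisely at the Stein-Tomas exponent $p=2(n+3)/(n+1)$, at the critical Sobolev exponent $p=2n/(n-2)$, and (in dimension one) at $p=4$. These transition points are exactly the thresholds appearing in the statement. Factoring $T_{m_k}$ through $P_k$ and combining the $L^2$-bound coming from \eqref{primeranorma} with the cluster estimates yields the announced dyadic gain for $p\geq 2$.

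After establishing the range $p\geq 2$, the case $1<p\leq 2$ follows by duality: the formal adjoint of a pseudo-multiplier is again a pseudo-multiplier whose symbol satisfies a H\"ormander condition of the same order, which explains the symmetry of the two lists of ranges about $p=2$. The improvement from $s_{n,p}$ to $s_{n,p}-\frac{1}{12}$ under the alternative hypothesis \eqref{segundanorma} should come from working directly in the Hermite basis: the inverse Hermite transform in \eqref{segundanorma} detects spectral oscillation more sharply than the Euclidean Fourier transform appearing in \eqref{primeranorma}, producing the $2^{-k/12}$ gain in each dyadic piece that underlies refined Hermite cluster estimates.

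The main obstacle is the $x$-dependence of the symbol. Unlike in the spectral multiplier case, one cannot simply factor $T_{m_k}=m_k(\cdot,H)$ through the projector $P_k$, since $m_k(x,\nu)$ varies with $x$. I would handle this by a frozen-coefficient argument exploiting the uniformity in $y\in\mathbb{R}^n$ built into the norms \eqref{primeranorma} and \eqref{segundanorma}: writing the Schwartz kernel of $T_{m_k}$ as a bilinear pairing against the synthesis of $m_k(x,\cdot)$ and freezing $x$ at a chosen base point within any small ball, the uniform H\"ormander control allows one to apply the cluster estimates to the resulting spectral multiplier and absorb the commutator remainders through the $L^2$-theory. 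The endpoint cases $p=2(n+3)/(n+1)$ and $p=2n/(n-2)$ each require an additional restricted weak-type argument, which accounts for the strict inequalities in the smoothness thresholds.
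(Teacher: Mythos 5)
The first thing you should know is that this paper does not prove Theorem \ref{maintheoremCardonaRuzhansky} at all: it is explicitly \emph{recorded} as the main theorem of the reference \cite{CardonaRuzhansky2018} (``Now, we record the main theorem of \cite{CardonaRuzhansky2018}''), and the only result actually proved here is the $L^\infty$--$\textnormal{BMO}$ statement of Theorem \ref{maintheorem}. So there is no in-paper proof to compare your proposal against; I can only assess your sketch on its own terms and against the method the paper does exhibit at the $\textnormal{BMO}$ endpoint. Your skeleton --- dyadic decomposition $m=\sum_k m(\cdot,\nu)\psi(2^{-k}|\nu|)$, an $L^2\to L^p$ gain on each spectral annulus from Hermite cluster and eigenfunction estimates with transitions at $p=2(n+3)/(n+1)$, $p=2n/(n-2)$ and $p=4$, then geometric summation for $s>s_{n,p}$ --- is the natural one and is consistent with the numerology of the statement and with the proof of Theorem \ref{maintheorem} in Section \ref{Sec3}, which performs exactly this Littlewood--Paley decomposition and then sums the roughly $2^{kn}$ terms in each annulus term by term using norms of individual Hermite functions (the additive constant $\frac{3n}{2}$ in $s_{n,p}$, like the $\frac{7n}{4}$ in Theorem \ref{maintheorem}, is the price of such a crude summation rather than of a genuine square-function argument).

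That said, two steps of your proposal do not hold up as written. First, the duality reduction for $1<p\leq 2$: the adjoint of $Af(x)=\sum_\nu m(x,\nu)\widehat f(\phi_\nu)\phi_\nu(x)$ is \emph{not} again a pseudo-multiplier in this quantization; it is the dual quantization determined by $\widehat{A^*g}(\phi_\nu)=\int \overline{m(x,\nu)}\,g(x)\phi_\nu(x)\,dx$, so you cannot simply say that the adjoint ``is again a pseudo-multiplier whose symbol satisfies a H\"ormander condition of the same order.'' This is precisely where the $x$-dependence bites, and the symmetry of the two lists about $p=2$ needs an actual argument, not an appeal to duality. Second, the ``frozen-coefficient plus commutator'' step, which you correctly identify as the main obstacle, is asserted rather than carried out: the hypotheses \eqref{primeranorma} and \eqref{segundanorma} impose only \emph{uniformity} in the spatial variable $y$, no regularity, so there is no smallness available for the commutator remainders you propose to ``absorb through the $L^2$-theory,'' and the factorization of $T_{m_k}$ through the spectral projector genuinely fails for variable symbols. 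Finally, your claim that the endpoint exponents require a restricted weak-type argument is not reflected in the statement, which treats $p=2(n+3)/(n+1)$ and $p=2n/(n-2)$ as strong-type cases with an adjusted threshold $s_{n,p}$. In short: the skeleton is right, but the two places where pseudo-multipliers genuinely differ from spectral multipliers are exactly the places your proposal leaves unproved.
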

Now we present our main result. We will provide a version of Theorem \ref{maintheoremCardonaRuzhansky} for the critical case   $p=\infty.$
  Because, in harmonic analysis the John-Nirenberg class $\textnormal{BMO}$ (see \cite{Jhon}) is a good substitute of $L^\infty$  we will investigate the boundedness of pseudo-multipliers  from $L^\infty(\mathbb{R}^n)$ to $\textnormal{BMO}(\mathbb{R}^n)$.
\begin{theorem}\label{maintheorem} Let  $A:\mathscr{S}(\mathbb{R}^n)\rightarrow \mathscr{S}(\mathbb{R}^n)$ be a   continuous linear operator such that its symbol  $m=\{m(x,\nu)\}_{x\in G,\,\nu\in \mathbb{N}_0^n}$  $(\textnormal{see } \eqref{symbol''})$ satisfies one of the following conditions,
\begin{itemize}
    \item[(CI):] $m$ satisfies the H\"ormander-Mihlin condition \begin{equation}\label{primeranorma''}
\Vert m \Vert_{l.u., H^s}:=\sup_{r>0,\,y\in\mathbb{R}^n} \,r^{(s-\frac{n}{2})}\Vert  \langle x\rangle^{s}\mathscr{F}[m(y,\cdot)\psi(r^{-1} |\cdot|)](x)\Vert_{L^2({\mathbb{R}}^n_x)}<\infty,
\end{equation}
    where   $s>\max\{\frac{7n}{4}+\varkappa,\frac{n}{2}\},$ and $\varkappa$ is defined as in  \eqref{defivarkappa},
    \item[(CII):] $m$ satisfies the Marcinkiewicz type condition, \begin{equation}
    |\Delta_\nu^\alpha m(x,\nu)|\leq C_\alpha (1+|\nu|)^{-|\alpha|},\,\,|\alpha|\leq [7n/4-1/12]+1.
\end{equation}
\end{itemize}
Then the operator
$A=T_m$ extends to a bounded operator from $L^\infty(\mathbb{R}^n)$ into $\textnormal{BMO}(\mathbb{R}^n).$
\end{theorem}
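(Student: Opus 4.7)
The plan is to realize $T_m$ as a Calder\'on--Zygmund type operator and then invoke the classical $L^\infty$--$\textnormal{BMO}$ boundedness for such operators. Two ingredients are needed: the $L^2$-boundedness of $T_m$, and a H\"ormander-type condition on the kernel
\begin{equation*}
K(x,y)=\sum_{\nu\in\mathbb{N}_0^n}m(x,\nu)\phi_\nu(x)\phi_\nu(y).
\end{equation*}
The $L^2$-boundedness will be immediate from Theorem \ref{maintheoremCardonaRuzhansky}: under (CI) the threshold $s>\max\{7n/4+\varkappa,n/2\}$ exceeds the $3n/2$-threshold for $L^2$-continuity, and under (CII) the number of admissible discrete differences exceeds $[3n/2]+1$, so the criterion \eqref{DiscreteconditionIntro} applies.

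For the kernel regularity, I would first perform a Littlewood--Paley decomposition on the Hermite spectrum. Choose a dyadic partition of unity $\{\psi_k\}_{k\geq 0}$ with $\textnormal{supp}(\psi_k)\subset\{|\nu|\sim 2^k\}$, write $m(x,\nu)=\sum_{k}m_k(x,\nu)$ with $m_k(x,\nu)=m(x,\nu)\psi_k(|\nu|)$, and denote the corresponding kernel pieces by $K_k$. For each dyadic piece one then estimates $\|K_k(\cdot,y)-K_k(\cdot,y_0)\|_{L^1(|x-y_0|>2|y-y_0|)}$ by Cauchy--Schwarz against a weight $\langle x\rangle^{s}$, converting the $L^1$ estimate into a weighted $L^2$ estimate that is controlled by the norm \eqref{primeranorma''} of $m_k(y,\cdot)$ together with pointwise $L^\infty$-bounds for Hermite functions of Koch--Tataru / Thangavelu type. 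Summing in $k$ using the smoothness encoded in (CI) or the difference condition (CII) yields the H\"ormander kernel condition
\begin{equation*}
\sup_{y_0,y\in\mathbb{R}^n}\int\limits_{|x-y_0|>2|y-y_0|}|K(x,y)-K(x,y_0)|\,dx<\infty,
\end{equation*}
at which point the standard Calder\'on--Zygmund theorem delivers the extension $T_m:L^\infty(\mathbb{R}^n)\to\textnormal{BMO}(\mathbb{R}^n)$.

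The main obstacle is the kernel estimate itself. Unlike spectral multipliers, the symbol $m(x,\nu)$ depends on the spatial variable, so $K(x,y)$ is not translation-invariant and one cannot appeal directly to Fourier-analytic tools. The argument must exploit the uniformity in $y\in\mathbb{R}^n$ of the norm \eqref{primeranorma''} together with the precise pointwise decay of Hermite functions. The critical exponent $7n/4+\varkappa$ arises as a balance between the $3n/2$ regularity required for $L^2$-continuity in Theorem \ref{maintheoremCardonaRuzhansky} and the additional $n/4+\varkappa$ needed to compensate for the growth of $\langle x\rangle^{s}$ against Hermite kernel estimates in the weighted $L^2$ bound described above. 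Once the kernel condition is in place, the passage from (CII) to (CI) is routine via the standard embedding of Marcinkiewicz classes into the H\"ormander--Mihlin scale, and the Calder\'on--Zygmund machinery closes the argument.
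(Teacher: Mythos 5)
Your proposal is a program rather than a proof: the decisive step, the H\"ormander kernel estimate for $K(x,y)=\sum_\nu m(x,\nu)\phi_\nu(x)\phi_\nu(y)$, is precisely the part you leave undone, and you yourself flag it as ``the main obstacle.'' Nothing in the sketch shows how the weighted Cauchy--Schwarz argument closes or how the dyadic sum converges at the stated threshold. Two further points would need repair even if that estimate were supplied. First, the condition you write, $\sup_{y_0,y}\int_{|x-y_0|>2|y-y_0|}|K(x,y)-K(x,y_0)|\,dx<\infty$, is smoothness in the \emph{second} variable integrated in the first; combined with $L^2$-boundedness this yields weak $(1,1)$, equivalently $L^\infty$--$\textnormal{BMO}$ for the \emph{adjoint}. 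The conclusion $T_m:L^\infty\to\textnormal{BMO}$ requires the dual condition with the roles of the variables exchanged, and since $m$ depends on $x$ the operator is not self-adjoint, so the two conditions are genuinely different. Second, your reduction of $L^2$-boundedness to Theorem \ref{maintheoremCardonaRuzhansky} tacitly assumes $\max\{\frac{7n}{4}+\varkappa,\frac{n}{2}\}\geq\frac{3n}{2}$; but $\varkappa$ is defined in \eqref{defivarkappa} as an infimum only known to satisfy $\varkappa\leq-1/12$, so this inequality is not guaranteed by the hypotheses as stated.

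The paper's proof takes an entirely different and much more elementary route: it never forms the kernel and never invokes $L^2$-boundedness or Calder\'on--Zygmund theory. After the same dyadic decomposition $m=\sum_k m\,\psi_k(|\cdot|)$, it bounds $\Vert T_{m(k)}f\Vert_{*}$ by the triangle inequality over the $\asymp 2^{kn}$ indices $\nu$ in the block, controlling each term $\Vert m_k(\cdot,\nu)\phi_\nu\Vert_{*}\,|\widehat{f}(\phi_\nu)|$ via $\textnormal{BMO}$--$\textnormal{H}^1$ duality, the estimate $\int|\widehat{m}_k(x,\xi)|\,d\xi\lesssim\Vert m\Vert_{l.u.H^s}\,2^{-k(s-\frac{n}{2})}$, and the norms $\Vert\phi_\nu\Vert_{L^1}\lesssim|\nu|^{n/4}$ and $\Vert\phi_\nu\Vert_{\textnormal{BMO}}\lesssim|\nu|^{\varkappa}$. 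The block operator norm is then $\lesssim 2^{kn}\,2^{k(\frac{n}{4}+\varkappa)}\,2^{-k(s-\frac{n}{2})}=2^{-k(s-\frac{7n}{4}-\varkappa)}$, so the threshold $s>\frac{7n}{4}+\varkappa$ comes from the cardinality of the dyadic block together with the $L^1$ and $\textnormal{BMO}$ norms of individual Hermite functions, not from the balance against an $L^2$ requirement that you conjecture. If you wish to pursue the Calder\'on--Zygmund route you must actually produce the kernel regularity estimates (in the correct variable), which for Hermite expansions with $x$-dependent symbols is substantially harder than the paper's direct summation.
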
 Now, we will discuss some consequences of our main result.
\begin{remark}
In relation with the results of Epperson \cite{Epperson} and Bagchi and Thangavelu \cite{BagchiThangavelu} mentioned above, Theorem \ref{maintheorem} implies that under one of the following conditions,\begin{itemize}
    \item $n=1,\,\,|\Delta_\nu^\gamma m(x,2\nu+1)|\leq C_\gamma (2\nu+1)^{-|\gamma|},\,\,0\leq \gamma\leq 2,$
    \item $n\geq 2,\,\,|\Delta_\nu^\gamma m(x,2|\nu|+n)|\leq C_\gamma (2|\nu|+n)^{-|\gamma|},\,\,0\leq |\gamma|\leq [7n/4-1/12]+1,$
\end{itemize} the spectral pseudo-multiplier
\begin{equation}Ef(x)=\sum_{\nu\in \mathbb{N}_{0}^n}m(x,2|\nu|+n)\widehat{f}(\phi_\nu)\phi_\nu(x),\,\,f\in \textnormal{Dom}(E),
\end{equation}
extends to a bounded operator from $L^\infty(\mathbb{R}^n)$ into $\textnormal{BMO}(\mathbb{R}^n).$
\end{remark}
\begin{remark} For $n=1,$ Theorem \ref{maintheoremCardonaRuzhansky} implies that the symbol inequalities
\begin{eqnarray}\label{ruz}
|\Delta_\nu^{\gamma} m(x,\nu)|\leq C_{\gamma}(1+\nu)^{-\alpha},\,\,\,0\leq \gamma\leq 2,
\end{eqnarray} are sufficient conditions for the $L^p(\mathbb{R})$-boundedness of pseudo-multipliers with $\frac{4}{3}<p<4,$ and also under the estimates
\begin{eqnarray}\label{CarRuz}
|\Delta_\nu^{\gamma} m(x,\nu)|\leq C_{\gamma}(1+\nu)^{-\alpha},\,\,\,0\leq \gamma\leq 3,
\end{eqnarray}we obtain the $L^p(\mathbb{R})$-boundedness of $T_m$ for all $p\in (1,4/3)\cup(4,\infty).$ However, we can improve the conditions on the number of derivatives imposed in \eqref{CarRuz} to discrete derivatives up to order 2 in order to assure the $L^p(\mathbb{R})$-boundedness of $T_m$ for all $4/3\leq p<\infty.$ Indeed, from Theorem \ref{maintheorem}, the hypothesis \eqref{ruz} implies the boundedness of $T_m$ from $L^\infty(\mathbb{R})$ to $\textnormal{BMO}(\mathbb{R})$ and also its $L^p(\mathbb{R})$-boundedness for $4/3\leq p<\infty,$ in view of the Stein-Fefferman interpolation theorem applied to the $L^2$-$L^2$ and $L^\infty$-$\textnormal{BMO}$ boundedness results. 
\end{remark}
\begin{remark}
Let us consider  a multiplier $T_m$ of the harmonic oscillator. Theorem \ref{maintheorem} assures that under one of the following conditions,
\begin{itemize}
    \item[(CI)':] $m$ satisfies the H\"ormander-Mihlin condition \begin{equation}\label{primeranorma'''}
\Vert m \Vert_{l.u., H^s}:=\sup_{r>0} \,r^{(s-\frac{n}{2})}\Vert  \langle x\rangle^{s}\mathscr{F}[m(\cdot)\psi(r^{-1} |\cdot|)](x)\Vert_{L^2({\mathbb{R}}^n_x)}<\infty,
\end{equation}
    where   $s>\max\{\frac{7n}{4}+\varkappa,\frac{n}{2}\},$ and $\varkappa$ is defined as in  \eqref{defivarkappa},
    \item[(CII)':] $m$ satisfies the Marcinkiewicz type condition, \begin{equation}
    |\Delta_\nu^\alpha m(\nu)|\leq C_\alpha (1+|\nu|)^{-|\alpha|},\,\,|\alpha|\leq [7n/4-1/12]+1,
\end{equation}
\end{itemize} the operator $T_m$ extends to a bounded operator from $L^\infty(\mathbb{R}^n)$ into $\textnormal{BMO}(\mathbb{R}^n).$
Moreover, the duality argument shows the boundedness of $T_m$ from $\textnormal{H}^1(\mathbb{R}^n)$ into $L^1(\mathbb{R}^n).$
\end{remark}
For certain spectral aspects and applications to PDE of the theory of pseudo-multipliers  we refer the reader to the works \cite{CardonaBarraza,Cardona,CardonaEJ,CardonaRM} and \cite{thangavelu0}. This paper is organised as follows. Section \ref{Sec2} introduces the necessary background of harmonic analysis that we will use throughout this work. Finally, in Section \ref{Sec3} we prove our main theorem.

\section{Preliminaries}\label{Sec2} 
\subsection{Pseudo-multipliers of the harmonic oscillator}
To motivate the definition of pseudo-multipliers  we will prove that these operators arise, for example, as bounded linear operators on  the Schwartz class  $\mathscr{S}(\mathbb{R}^n) .$ 
\begin{theorem}\label{pseudomutiplier''}
Let us consider the set $G:=\{z\in \mathbb{R}^n:\phi_{\nu}(z)\neq 0,\textnormal{ for all }\nu\},$ and let $A:\mathscr{S}(\mathbb{R}^n)\rightarrow \mathscr{S}(\mathbb{R}^n)$ be a continuous linear operator. Then,  the function $m:G\times \mathbb{N}^n_0\rightarrow \mathbb{C},$\footnote{The symbol $m$ is defined $a.e.$ $(x,\nu)\in \mathbb{R}^n\times \mathbb{N}_0^n.$ Indeed, note that $D=\{z:\phi_\nu(z)=0\textnormal{  for some  }\nu\}$ is a countable set, has zero measure and that $m$ is defined on $G\times \mathbb{N}_0^n,$ where $G=\mathbb{R}^n-D.$}   defined by
\begin{equation}\label{symbol''}
    m(x,\nu):=\phi_{\nu}(x)^{-1}A\phi_\nu(x),\,\,\,x\in G,
\nu\in \mathbb{N}_0^n,\end{equation} satisfies the property
\begin{equation}\label{pseudomultiplier''}
    Af(x)=\sum_{\nu\in \mathbb{N}_{0}^n}m(x,\nu)\widehat{f}(\phi_\nu)\phi_\nu(x),\,\,\,x\in G,\,f\in\mathscr{S}(\mathbb{R}^n).
\end{equation}
\end{theorem}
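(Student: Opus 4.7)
The plan is to combine two standard ingredients: the Schwartz-topology convergence of the Hermite expansion of any $f \in \mathscr{S}(\mathbb{R}^n)$, and the continuity of $A$ on $\mathscr{S}(\mathbb{R}^n)$. First I would check that $G$ has full measure: the zero set of each $\phi_\nu$ is finite in dimension one and a finite union of affine hyperplanes in higher dimensions, hence in any case Lebesgue null; taking the countable union over $\nu\in\mathbb{N}_0^n$, the set $D=\{z:\phi_\nu(z)=0 \textnormal{ for some } \nu\}$ is still of measure zero. Consequently the symbol $m(x,\nu)=\phi_\nu(x)^{-1}A\phi_\nu(x)$ is well defined pointwise on the full-measure set $G$.

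The key analytic input is the classical characterization of the Schwartz class through the Hermite basis: $f\in\mathscr{S}(\mathbb{R}^n)$ if and only if the Hermite coefficients $\widehat{f}(\phi_\nu)$ decay faster than any inverse polynomial in $|\nu|$, and in that case the partial sums
\begin{equation}
S_N f:=\sum_{|\nu|\leq N}\widehat{f}(\phi_\nu)\,\phi_\nu
\end{equation}
converge to $f$ in the Fr\'echet topology of $\mathscr{S}(\mathbb{R}^n)$. This fact rests on the equivalence between the usual Schwartz seminorms and the family of norms $f\mapsto \|H^k f\|_{L^2}$, combined with the spectral identity $H\phi_\nu=(2|\nu|+n)\phi_\nu$; I would quote it from the Hermite-analysis literature (see e.g.\ \cite{Thangavelu,Thangavelu2}) rather than reprove it.

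Granting this, continuity of $A:\mathscr{S}(\mathbb{R}^n)\to\mathscr{S}(\mathbb{R}^n)$ permits the interchange
\begin{equation}
Af=\lim_{N\to\infty}A(S_N f)=\lim_{N\to\infty}\sum_{|\nu|\leq N}\widehat{f}(\phi_\nu)\,A\phi_\nu,
\end{equation}
with convergence in $\mathscr{S}(\mathbb{R}^n)$. Since pointwise evaluation is a continuous linear functional on $\mathscr{S}(\mathbb{R}^n)$, the limit holds at every $x\in\mathbb{R}^n$. For $x\in G$, where $\phi_\nu(x)\neq 0$ for all $\nu$, I may factor $A\phi_\nu(x)=\phi_\nu(x)\cdot m(x,\nu)$ and obtain
\begin{equation}
Af(x)=\sum_{\nu\in\mathbb{N}_0^n}m(x,\nu)\,\widehat{f}(\phi_\nu)\,\phi_\nu(x),
\end{equation}
which is exactly \eqref{pseudomultiplier''}.

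The only non-routine step is the Schwartz convergence of the Hermite expansion, and once that is imported the argument is a one-line consequence of the continuity of $A$ and the definition of $m$. I expect no further obstacle; the theorem is essentially a representation-theoretic tautology, useful only because it provides the starting point for associating a symbol $m(x,\nu)$ with every continuous endomorphism of $\mathscr{S}(\mathbb{R}^n)$.
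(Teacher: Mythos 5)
Your argument is correct and follows essentially the same route as the paper's own proof: expand $f$ in the Hermite basis, invoke convergence of the partial sums in the Schwartz topology (the paper cites Simon's theorem where you cite the Hermite characterization of $\mathscr{S}(\mathbb{R}^n)$, but these are the same fact), apply continuity of $A$ to pass the operator inside the sum, and factor $A\phi_\nu(x)=m(x,\nu)\phi_\nu(x)$ for $x\in G$. Your added remarks on the null set $D$ and the continuity of point evaluation only make explicit what the paper leaves implicit.
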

\begin{proof} Let us assume that $A$ is a continuous linear operator $A:\mathscr{S}(\mathbb{R}^n)\rightarrow \mathscr{S}(\mathbb{R}^n) .$ Because, for every $\nu\in\mathbb{N}_0^n,$ $\phi_\nu\in \mathscr{S}(\mathbb{R}^n)=\textnormal{Dom}(A),$  define for every $x\in G,$ and $\nu\in \mathbb{N}_0^n,$ the function
\begin{equation}
   m(x,\nu):=\phi_\nu(x)^{-1} A\phi_\nu(x).
\end{equation} Let $f\in \mathscr{S}(\mathbb{R}^n)\subset L^2(\mathbb{R}^n)$ and let us consider its Hermite series
\begin{equation}
    f=\sum_{\nu\in \mathbb{N}_0^n}\widehat{f}(\phi_\nu)\phi_\nu.
\end{equation} Because $\Vert f\Vert_{L^2(\mathbb{R}^n)}^2=\sum_{\nu}|\widehat{f}(\phi_\nu)\|^2<\infty, $ by  Simon Theorem (see Theorem 1 of B. Simon \cite{Simon}), the series 
\begin{equation}
    f_{N}=\sum_{|\nu|\leq N}\widehat{f}(\phi_\nu)\phi_\nu,\,\,N\in \mathbb{N},
\end{equation} converges to $f$
in the topology of the Schwartz class $\mathscr{S}(\mathbb{R}^n).$ Because, $A:\mathscr{S}(\mathbb{R}^n)\rightarrow \mathscr{S}(\mathbb{R}^n) ,$ is a continuous linear operator, we have that $Af_n$ converges to   $Af$ in the topology of $\mathscr{S}(\mathbb{R}^n).$ Consequently, we have proved that
\begin{equation}
    Af=\sum_{\nu\in \mathbb{N}_0^n}\widehat{f}(\phi_\nu)A\phi_\nu.
\end{equation} By observing that $m(x,\nu):=\phi_\nu(x)^{-1}A\phi_\nu(x),$ we obtain the identity,
$$  Af(x)=\sum_{\nu\in \mathbb{N}_{0}^n}m(x,\nu)\widehat{f}(\phi_\nu)\phi_\nu(x),\,\,\,x\in G,\,f\in\mathscr{S}(\mathbb{R}^n). $$ So, we end the proof.
\end{proof}
\begin{remark}\label{pseudodifferntial}
It is a well known fact that several classes of pseudo-differential operators
\begin{eqnarray}
T_\sigma f(x)=\int_{\mathbb{R}^n}e^{i2\pi x\xi}\sigma(x,\xi)\widehat{f}(\xi)d\xi,\,\,\,f\in C_{0}^\infty(\mathbb{R}^n),
\end{eqnarray}
are continuous linear operators on the Schwartz class $\mathscr{S}(\mathbb{R}^n).$ For example, if $\sigma$ is a tempered and smooth function (i.e. that $\sigma\in C^\infty(\mathbb{R}^{2n})$ satisfies  $\int|\sigma(x,\xi)|(1+|x|+|\xi|)^{-\kappa}dxd\xi<\infty $ for some $\kappa>0$) then $T_\sigma:\mathscr{S}(\mathbb{R}^n)\rightarrow \mathscr{S}(\mathbb{R}^n),$ extends to a continuous linear operator. More interesting cases arise with pseudo-differential operators with symbols $\sigma$ in the H\"ormander classes, or with more generality, in the Weyl-H\"ormander classes (see L. H\"ormander \cite{Hor1,Hor2}). From Theorem \ref{pseudomutiplier''} we have that continuous pseudo-differential operators on $ \mathscr{S}(\mathbb{R}^n) $ also can be understood as  pseudo-multipliers of the harmonic oscillator.
\end{remark}

\subsection{Functions of bounded mean oscillation $\textnormal{BMO}$.} We will consider in the following two subsection the necessary notions for introducing the $\textnormal{BMO}$ and $\textnormal{H}^1$
spaces. For this, we will follow Fefferman and Stein \cite{FeSte1972}. 
Let $f$ be a locally integrable function on $\mathbb{R}^n.$ Then $f$ is of bounded mean oscillation (abreviated as $f\in \textnormal{BMO}(\mathbb{R}^n)$), if
\begin{equation}
    \sup_{Q}\frac{1}{|Q|}\int\limits_{Q}|f(x)-f_Q|dx:=\Vert f \Vert_{*}<\infty,
\end{equation} where the supremum ranges over all finite cubes $Q$ in $\mathbb{R}^n,$ $|Q|$ is the Lebesgue measure of $Q,$ and $f_Q$  denote the mean value of $f$ over $Q,$ $f_Q=\frac{1}{|Q|}\int_Q f(x)dx.$ It is a well known fact that  $L^\infty(\mathbb{R}^n)\subset \textnormal{BMO}.$ Moreover $\ln(|x|)\in \textnormal{BMO}.$ The class of
functions of bounded mean oscillation, modulo constants, is a Banach space with the norm
$\Vert \cdot\Vert_{*}$, defined above. 
According to the John-Nirenberg inequality, $f\in \textnormal{BMO}(\mathbb{R}^n)$ if and only if, the inequality 
\begin{equation}
    |\{x\in Q: |f(x)-f_Q|>\alpha\}|\leq e^{-\frac{C_\alpha}{\Vert f \Vert_{*}}}|Q|,
\end{equation}
holds true for every $\alpha>0.$ For understanding the behaviour of a function $f\in \textnormal{BMO}(\mathbb{R}^n),$ it can be checked that
\begin{equation}\label{ast}
    \int\limits_{\mathbb{R}^n}\frac{|f(x)|}{1+|x|^{n+1}}dx<\infty.
\end{equation} Moreover, a function $f\in \textnormal{BMO}(\mathbb{R}^n),$ if and only if \eqref{ast} holds and 
\begin{equation}
    \iint\limits_{|x-x_0|<\delta;0<t<\delta}t|\nabla u(x,t)|^2dxdt\lesssim \delta^n,
\end{equation}for all $x_0\in \mathbb{R}^n$ and $\delta>0.$ Here, $u(x,t)$ is the Poisson integral of $f$ defined on $\mathbb{R}^n\times(0,\infty)$  by (see Fefferman \cite{Fefferman1971}),
\begin{equation}
    u(x,t)=\int\limits_{\mathbb{R}^n}P_t(x-y)f(y)dy,\,\,P_t(x):=\frac{c_nt}{(t^2+|x|^2)^{(n+1)/2}}.
\end{equation}

\subsection{The space $\textnormal{H}^1$}   The Hardy spaces $H^p(\mathbb{D}),$ $0<p<\infty,$ were first studied as part of complex analysis by
G. H. Hardy \cite{Hardy}. An analytic function $F$ on the disk $\mathbb{D}$ is in $H^p(\mathbb{D}),$ if 
\begin{equation}
    \sup_{0<r<1}\int\limits_{-\pi}^\pi|F(re^{i\theta})|^pd\theta<\infty.
\end{equation} For $1<p<\infty,$
we can identify $H^p(\mathbb{D}),$ with $L^p(\mathbb{T}),$ where $\mathbb{T}$ is the circle. This identification does not hold, however, for $p\leq 1.$ Unfortunately, these results cannot be extended to higher dimensions
using the theory of functions of several complex variables. So,
let us introduce the Hardy space $\textnormal{H}^1(\mathbb{R}^n).$ Let $R_{1},\cdots ,R_{n},$ be the Riesz transform on $\mathbb{R}^n,$
\begin{equation}
    R_{j}f(x)=\lim\limits_{\varepsilon\rightarrow 0}\int\limits_{|\xi|>\varepsilon}e^{i2\pi x\cdot \xi}{\xi_j}/{|\xi|}\widehat{f}(\xi),\,\,\,f\in \textnormal{Dom}(R_j),
\end{equation}where $\widehat{f}(\xi)=\int_{\mathbb{R}^n}e^{-i2\pi x\cdot \xi}f(x)dx,$ is the Fourier transform of $f$ at $\xi.$ Then, $\textnormal{H}^1(\mathbb{R}^n)$ consists of those functions $f$
on $\mathbb{R}^n,$ satisfying,
\begin{equation}
    \Vert f \Vert_{\textnormal{H}^1(\mathbb{R}^n)}:=\Vert f\Vert_{L^1(\mathbb{R}^n)}+\sum_{j=1}^n\Vert R_jf\Vert_{L^1(\mathbb{R}^n)}. 
\end{equation}
The main remark in this subsection is that the dual of $\textnormal{H}^1(\mathbb{R}^n)$ is $\textnormal{BMO}(\mathbb{R}^n)$ (see Fefferman and Stein \cite{FeSte1972}). This can be understood in the following sense:
\begin{itemize}
    \item[(a).] If $\phi\in \textnormal{BMO}(\mathbb{R}^n), $ then $\Phi: f\mapsto \int\limits_{\mathbb{R}^n}f(x)\phi(x)dx,$ admits a bounded extension on $\textnormal{H}^1(\mathbb{R}^n).$
    \item[(b).] Conversely, every continuous linear functional $\Phi$ on $\textnormal{H}^1(\mathbb{R}^n)$ arises as in $\textnormal{(a)}$ with a unique element $\phi\in \textnormal{BMO}(\mathbb{R}^n).$
\end{itemize} The norm of $\phi$ as a linear functional on $\textnormal{H}^1(\mathbb{R}^n)$ is equivalent with the $\textnormal{BMO}$ norm. Important properties of the $\textnormal{BMO}$ and the $\textnormal{H}^1$ norm are the followings,
\begin{equation}\label{BMOnormduality}
 \Vert f \Vert_{*}  =\sup_{\Vert g\Vert_{\textnormal{H}^1}=1} 
\left| \int\limits_{\mathbb{R}^n}f(x)g(x)dx\right|,\,\,\,\,\,\Vert g \Vert_{\textnormal{H}^1}  =\sup_{\Vert f\Vert_{\textnormal{BMO}}=1} 
\left| \int\limits_{\mathbb{R}^n}f(x)g(x)dx\right|.
\end{equation}
 For our further analysis we will use the following fact (see Fefferman and Stein \cite[pag. 183]{FeSte1972}): if $f\in \textnormal{H}^1(\mathbb{R}^n),$ and $\phi\in \mathscr{S}(\mathbb{R}^n)$ satisfies $\int \phi(x)dx=1,$ let us define 
\begin{equation}\label{u+}
    u^{+,f}(x):=\sup_{t>0}|\phi_{t}\ast f(x)|=\sup_{t>0}\left|\int\limits_{\mathbb{R}^n}\phi_t(x-y)f(y)dy\right|,\,\,\,\phi_t(x)=t^{-n}\phi(\frac{x}{t}).
\end{equation} Then, $u^{+,f}\in L^1(\mathbb{R}^n),$ $f(x)=\lim_{t\rightarrow 0}\phi_{t}\ast f(x),$ $a.e.x,$ and there exist positive constants $A$ and $B$ satisfying
\begin{equation}\label{bound}
    A\Vert f \Vert_{\textnormal{H}^1}\leq \Vert u^{+,f} \Vert_{L^1}\leq B\Vert f \Vert_{\textnormal{H}^1}.
\end{equation}
The duals of the $H^p(\mathbb{R}^n)$ spaces, $0<p<1$, are Lipschitz spaces. This is due to
P. Duren, B. Romberg and A. Shields \cite{Duren} on the unit circle, and
to T. Walsh \cite{Walsh} in $\mathbb{R}^n$.

\subsection{The H\"ormander-Mihlin condition for pseudo-multipliers}
As we mentioned in the introduction, if $m$ is a function on $\mathbb{R}^n,$ we say that $m$ satisfies the H\"ormander condition of order $s>0,$ if
\begin{equation}\label{hormandercondition2}
\Vert m\Vert_{l.u.H^s}:=\sup_{r>0}\Vert m(r\cdot)\eta(|\cdot|) \Vert_{H^s(\mathbb{R}^n)}=\sup_{r>0}r^{s-\frac{n}{2}}\Vert m(\cdot)\eta(r^{-1}|\cdot|) \Vert_{H^s(\mathbb{R}^n)}<\infty,
\end{equation} where $H^s(\mathbb{R}^n)$ is the usual Sobolev space of order $s.$ Indeed, we also can use the following formulation for the H\"ormander-Mihlin condition,
\begin{equation}\label{hormandercondition3}
\Vert m\Vert_{l.u.H^s}:=\sup_{j\in \mathbb{Z}}\Vert m(2^j|\cdot|)\eta(\cdot) \Vert_{H^s(\mathbb{R}^n)}=\sup_{ j\in \mathbb{Z} }2^{j(s-\frac{n}{2})}\Vert m(\cdot)\eta(2^{-j}|\cdot|) \Vert_{H^s(\mathbb{R}^n)}<\infty.
\end{equation}In particular, if we choose $\eta\in\mathscr{D} (0,\infty)$ with compact support in $[1/2,2],$ and by assuming that $m$ has support in $\{\xi:|\xi|> 2\},$ we have that $m(\cdot)\eta(2^{-j}|\cdot|)=0$ for $j\leq 0.$
So, for a such symbol $m,$ we have 
\begin{equation}\label{hormandercondition33}
\Vert m\Vert_{l.u.H^s}:=\sup_{j\geq 1}\Vert m(2^j|\cdot|)\eta(\cdot) \Vert_{H^s(\mathbb{R}^n)}=\sup_{ j\geq 1 }2^{j(s-\frac{n}{2})}\Vert m(\cdot)\eta(2^{-j}|\cdot|) \Vert_{H^s(\mathbb{R}^n)}<\infty.
\end{equation}Because we define multipliers by associating to $T_m$ the restriction of $m$ to $\mathbb{N}_0^n,$ we always can split $T_m=T_0+S_m,$ where $T_0$ has symbol supported in $\{\nu:|\nu|\leq 2\}$ and the pseudo-multiplier $S_m$ has symbol supported in $\{\nu:|\nu|>2\}.$ We will apply the H\"ormander condition to $S_m$ in order to assure its $L^\infty$-$\textnormal{BMO}$ boundedness, and later we will conclude that $T_m$ is  $L^\infty$-$\textnormal{BMO}$ bounded, by observing that the $L^\infty$-$\textnormal{BMO}$ boundedness of $T_0$ is trivial. This analysis will be developed in detail in the next section, in the context of pseudo-multipliers by employing the H\"ormander type condition
\begin{equation}\label{hormandercondition4}
\Vert m\Vert_{l.u.H^s}:=\sup_{ j\geq 1,x\in \mathbb{R}^n }2^{j(s-\frac{n}{2})}\Vert m(x,\cdot)\eta(2^{-j}|\cdot|) \Vert_{H^s(\mathbb{R}^n)}<\infty,
\end{equation}for $s$ large enough which follows from \eqref{primeranorma''}.

\section{$L^\infty$-$\textnormal{BMO}$ continuity for pseudo-multipliers}\label{Sec3}
In this section we present the proof of our main result. The main strategy in the proof of Theorem \ref{maintheorem} will be a suitable Littlewood-Paley decomposition of the symbol together with  some suitable estimates for the operator norm of pseudo-multipliers associated to each part of this decomposition. Our starting point is the following lemma. We use
the symbol $X\lesssim Y$ to denote that there exists a universal constant $C$ such that $X\leq CY.$
\begin{lemma}
Let $\phi_\nu,$ $\nu\in \mathbb{N}_{0}^n$ be a Hermite function. Then, there exists $\varkappa\leq -1/12,$ such that 
\begin{equation}\label{varkappa}
    \Vert \phi_\nu \Vert_{\textnormal{BMO}}\lesssim |\nu|^{\varkappa}.
\end{equation}
\end{lemma}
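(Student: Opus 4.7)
My plan is to reduce the BMO bound to an $L^{\infty}$ bound, and then invoke the classical sharp pointwise estimates for Hermite functions.

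First, I would recall the elementary embedding $L^{\infty}(\mathbb{R}^{n})\hookrightarrow \textnormal{BMO}(\mathbb{R}^{n})$, more precisely $\Vert f\Vert_{*}\leq 2\Vert f\Vert_{L^{\infty}}$, which follows from the triangle inequality applied to $|f(x)-f_{Q}|$ inside any cube $Q$. Thus it suffices to prove the pointwise/$L^{\infty}$ inequality
\begin{equation*}
\Vert \phi_{\nu}\Vert_{L^{\infty}(\mathbb{R}^{n})}\lesssim |\nu|^{-1/12},
\end{equation*}
for $|\nu|\geq 1$, since then we can take $\varkappa:=-1/12$.

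The one-dimensional ingredient is the classical sharp sup-norm estimate for the Hermite functions, due to Szeg\H{o} (and used systematically by Thangavelu \cite{Thangavelu,Thangavelu2} and Koch--Tataru): for $k\geq 1$,
\begin{equation*}
\Vert \phi_{k}\Vert_{L^{\infty}(\mathbb{R})}\lesssim (2k+1)^{-1/12}.
\end{equation*}
For $k=0$, $\phi_{0}(x)=\pi^{-1/4}e^{-x^{2}/2}$ is bounded by an absolute constant. Since in $\mathbb{R}^{n}$ the Hermite function factors as $\phi_{\nu}(x)=\prod_{j=1}^{n}\phi_{\nu_{j}}(x_{j})$, we obtain
\begin{equation*}
\Vert \phi_{\nu}\Vert_{L^{\infty}(\mathbb{R}^{n})}=\prod_{j=1}^{n}\Vert \phi_{\nu_{j}}\Vert_{L^{\infty}(\mathbb{R})}\lesssim \prod_{\nu_{j}\geq 1}\nu_{j}^{-1/12},
\end{equation*}
with an implicit constant depending only on $n$. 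Since $|\nu|=\nu_{1}+\cdots+\nu_{n}$ and at least one coordinate satisfies $\nu_{j_{0}}\geq |\nu|/n$, we have $\nu_{j_{0}}^{-1/12}\lesssim |\nu|^{-1/12}$, and bounding the remaining factors by $1$ (or by a constant if the corresponding $\nu_{j}=0$) yields
\begin{equation*}
\Vert \phi_{\nu}\Vert_{L^{\infty}(\mathbb{R}^{n})}\lesssim |\nu|^{-1/12},\qquad |\nu|\geq 1.
\end{equation*}
Combining this with $\Vert \phi_{\nu}\Vert_{*}\leq 2\Vert \phi_{\nu}\Vert_{L^{\infty}}$ gives \eqref{varkappa} with $\varkappa=-1/12$.

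The only substantive point is the one-dimensional bound $\Vert \phi_{k}\Vert_{\infty}\lesssim k^{-1/12}$, which I would simply cite from \cite{Thangavelu,Thangavelu2}; it is the well-known critical exponent reflecting the behaviour of $\phi_{k}$ near the turning point $|x|\sim (2k+1)^{1/2}$, where an Airy-type asymptotic controls the maximum. The minor combinatorial observation that the tensor-product structure cannot be worse than the one-coordinate-concentrated case $\nu=(|\nu|,0,\dots,0)$ is what forces $\varkappa=-1/12$ (rather than something better) in general $n$, and this matches the factor $-1/12$ appearing in the hypothesis $s>\max\{\frac{7n}{4}+\varkappa,\frac{n}{2}\}$ of Theorem \ref{maintheorem}.
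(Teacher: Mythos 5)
Your proof is correct and follows essentially the same route as the paper: bound the $\textnormal{BMO}$ norm by the $L^{\infty}$ norm via the embedding $L^{\infty}\hookrightarrow\textnormal{BMO}$, and then invoke the sharp sup-norm decay $\Vert\phi_{\nu}\Vert_{L^{\infty}}\lesssim|\nu|^{-1/12}$ (the paper cites this directly from Remark 2.5 of \cite{CardonaRuzhansky2018}, whereas you derive the $n$-dimensional case from the one-dimensional Szeg\H{o}--Thangavelu estimate via the tensor-product structure, which is a harmless elaboration of the same step).
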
\begin{proof}
By using that $L^\infty\subset \textnormal{BMO},$ we have $\Vert \phi_\nu\Vert_{\textnormal{BMO}}\lesssim \Vert \phi_\nu\Vert_{L^\infty}.$ Now, from Remark 2.5 of \cite{CardonaRuzhansky2018} we can estimate $\Vert \phi_\nu\Vert_{L^\infty}\lesssim |\nu|^{-1/12} $ which implies the desired estimate. Indeed, if
\begin{equation}\label{defivarkappa}
    \varkappa:=\inf\{\omega\in \mathbb{R}:\Vert \phi_\nu \Vert_{\textnormal{BMO}}\lesssim |\nu|^{\omega}\},
\end{equation}we have that $\varkappa\leq -1/12.$
\end{proof}

\begin{proof}[Proof of Theorem \ref{maintheorem}] We will prove that if $m$ satisfies the condition $\textnormal{(CI)},$ then $A=T_m$ can be extended to a bounded operator from $L^\infty(\mathbb{R}^n)$ to $\textnormal{BMO}(\mathbb{R}^n).$  Let us consider  the operator \begin{equation}
    \mathcal{R}:=\frac{1}{2}(H-n), 
\end{equation} where $H$ is the harmonic oscillator on  $\mathbb{R}^n,$ and let us fix a dyadic
decomposition of its spectrum: we choose a function $\psi_0\in C^{\infty}_{0}(\mathbb{R}),$  $\psi_0(\lambda)=1,$  if $|\lambda|\leq 1,$ and $\psi(\lambda)=0,$ for $|\lambda|\geq 2.$ For every $j\geq 1,$ let us define $\psi_{j}(\lambda)=\psi_{0}(2^{-j}\lambda)-\psi_{0}(2^{-j+1}\lambda).$ Then we have
\begin{eqnarray}\label{deco1}
\sum_{l\in\mathbb{N}_{0}}\psi_{l}(\lambda)=1,\,\,\, \text{for every}\,\,\, \lambda>0.
\end{eqnarray}
Let us consider $f\in L^\infty(\mathbb{R}^n).$
We will decompose the symbol $m$ as
 \begin{equation}
  m(x,\nu)=m(x,\nu)(\psi_0(|\nu|)+\psi_1(|\nu|))+\sum_{k=2}^{\infty}  m_k(x,\nu),\,\,\,\,\, m_k(x,\nu):= m(x,\nu)\cdot \psi_{k}(|\nu|).
 \end{equation}
Let us define  the sequence of  pseudo-multipliers  $T_{m(j)},\,\,j\in \mathbb{N},$  associated to every symbol $m_j,$  for $j\geq 2,$ and by $T_{0} $ the operator with symbol   $\sigma\equiv m(x,\nu)(\psi_{0}+\psi_{1}).$ Then we want to show that the operator series
\begin{equation}
T_0+S_m,\,\,S_m:=\sum_{k} T_{m(k)},
\end{equation}
satisfies,
\begin{equation}
\Vert T_m \Vert_{\mathscr{B}(L^\infty(\mathbb{R}^n),\textnormal{BMO}(\mathbb{R}^n))}  \leq \Vert T_0 \Vert_{\mathscr{B}(L^\infty(\mathbb{R}^n)),\textnormal{BMO}(\mathbb{R}^n)} +\sum_k  \Vert T_{m(k)} \Vert_{\mathscr{B}(L^\infty(\mathbb{R}^n)),\textnormal{BMO}(\mathbb{R}^n)} ,
\end{equation}where the series in the right hand side converges.
Because, $f\in L^\infty(\mathbb{R}^n)$ and for every $j,$ $T_{m(j)}$ has symbol with compact support, $T_{m(j)}:L^\infty(\mathbb{R}^n)\rightarrow L^\infty(\mathbb{R}^n)$ is bounded, and consequently $T_{m(j)}f\in L^\infty(\mathbb{R}^n)\subset \textnormal{BMO}(\mathbb{R}^n).$ Now, because $T_{m(j)}f\in \textnormal{BMO}(\mathbb{R}^n),$ we will estimate its $\textnormal{BMO}$ norm $\Vert T_{m(j)}f\Vert_{*}$.
By using that every symbol $m_k$ has variable $\nu$  supported in $\{\nu:2^{k-1}\leq |\nu|\leq 2^{k+1}\},$ we have
\begin{equation*}
   T_{m(k)}f(x) =\sum_{2^{k-1}\leq |\nu|\leq 2^{k+1}}m_k(x,\nu)\phi_\nu(x)\widehat{f}(\phi_\nu),\,\,x\in\mathbb{R}^n.
\end{equation*}
Consequently,
\begin{equation}\label{step1}
    \Vert   T_{m(k)}f\Vert_{*}\leq \sum_{2^{k-1}\leq |\nu|\leq 2^{k+1}} \Vert   m_k(\cdot,\nu)\phi_\nu(\cdot) \Vert_{*}|\widehat{f}(\phi_\nu)|.
\end{equation}
From  \eqref{BMOnormduality} and by using the Fourier inversion formula we have,
\begin{align*}
    \Vert   m_k(\cdot,\nu)\phi_\nu(\cdot) \Vert_{*} &=\sup_{\Vert \Omega\Vert_{\textnormal{H}^1}=1}\left|\int\limits_{\mathbb{R}^n}  m_k(x,\nu)\phi_\nu(x)\Omega(x)dx\right|\\
    &= \sup_{\Vert \Omega\Vert_{\textnormal{H}^1}=1}\left|\int\limits_{\mathbb{R}^n}   \int\limits_{\mathbb{R}^n}e^{i2\pi \nu\cdot \xi} \widehat{m}_k(x,\xi)d\xi\,\phi_\nu(x)\Omega(x)dx\right|\\
    &\leq\sup_{\Vert \Omega\Vert_{\textnormal{H}^1}=1}\sup_{x\in \mathbb{R}^n}\int\limits_{\mathbb{R}^n}|\widehat{m}_k(x,\xi)|d\xi\times   \int\limits_{\mathbb{R}^n}|\phi_\nu(x)||\Omega(x)|dx.
\end{align*}
By the Cauchy-Schwarz inequality, and the condition $s>n/2,$ we have
\begin{equation}
     \int\limits_{\mathbb{R}^n} |\widehat{m}_{k}(x,\xi)|d\xi\leq \left( \int\limits_{\mathbb{R}^n} \langle\xi \rangle^{2s}|\widehat{m}_{k}(x,\xi)|^2d\xi  \right)^\frac{1}{2}\left( \int\limits_{\mathbb{R}^n} \langle\xi \rangle^{-2s}d\xi  \right)^\frac{1}{2}.
\end{equation} Consequently, we claim that
\begin{eqnarray}
\int\limits_{\mathbb{R}^n} |\widehat{m}_{k}(x,\xi)|d\xi\leq C\Vert m\Vert_{l.u.H^s}\times 2^{-k(s-\frac{n}{2})}.
\end{eqnarray}
Indeed, if $\tilde\psi(\lambda):=\psi_0(\lambda)-\psi_0(2\lambda),$ then $\tilde\psi\in \mathscr{D}(\mathbb{R})$ and,
\begin{align*}
    \int\limits_{\mathbb{R}^n} |\widehat{m}_{k}(x,\xi)|d\xi &\lesssim \Vert m_k(x,\cdot) \Vert_{H^s(\mathbb{R}^n)}=\Vert m(x,\cdot)\tilde\psi(2^{-k}|\cdot|) \Vert_{H^s(\mathbb{R}^n)}\\
    &\lesssim  \Vert m(x,\cdot)\Vert_{l.u.H^s}\times 2^{-k(s-\frac{n}{2})}\leq  \Vert m \Vert_{l.u., H^s}\times 2^{-k(s-\frac{n}{2})}.
\end{align*}  So, we obtain
\begin{align*}
   \Vert   m_k(\cdot,\nu)\phi_\nu(\cdot) \Vert_{*}&\leq \Vert m \Vert_{l.u., H^s}\times 2^{-k(s-\frac{n}{2})}\times\sup_{\Vert \Omega\Vert_{\textnormal{H}^1}=1}  \int\limits_{\mathbb{R}^n}|\phi_\nu(x)||\Omega(x)|dx\\
   &= \Vert m \Vert_{l.u., H^s}\times 2^{-k(s-\frac{n}{2})}\times\sup_{\Vert \Omega\Vert_{\textnormal{H}^1}=1}  \int\limits_{\mathbb{R}^n}\textnormal{sig}(\Omega(x))|\phi_\nu(x)|\Omega(x)dx,\\
\end{align*} where $\textnormal{sig}(\Omega(x))=-1,$ if $\Omega(x)<0,$ and $\textnormal{sig}(\Omega(x))=1,$ if $\Omega(x)\geq 0.$ By the duality relation \eqref{BMOnormduality}, and by using that
\begin{equation}
   \Vert \,\, \textnormal{sig}(\Omega(x))|\phi_\nu(x)|\Vert_{\textnormal{BMO}}\,\,\leq 2  \Vert \,\, |\textnormal{sig}(\Omega(x))|\phi_\nu(x)||\,\,\Vert_{\textnormal{BMO}} =2  \Vert \,\, |\phi_\nu(x)|\,\,\Vert_{\textnormal{BMO}}, 
\end{equation} we conclude that
\begin{align*}
     \Vert   m_k(\cdot,\nu)\phi_\nu(\cdot) \Vert_{*}&\lesssim  \Vert m \Vert_{l.u., H^s} 2^{-k(s-\frac{n}{2})} \sup_{\Vert \Omega\Vert_{\textnormal{H}^1}=1}   \Vert \phi_\nu \Vert_{\textnormal{BMO}}\Vert \Omega\Vert_{\textnormal{H}^1}.
\end{align*} Returning to the estimate \eqref{step1}, we can write
\begin{align*}
    \Vert   T_{m(k)}f\Vert_{*} &\leq \sum_{2^{k-1}\leq |\nu|\leq 2^{k+1}}   \Vert m \Vert_{l.u., H^s} 2^{-k(s-\frac{n}{2})}  \Vert \phi_\nu \Vert_{\textnormal{BMO}}|\widehat{f}(\phi_\nu)|\\
    &\leq \sum_{2^{k-1}\leq |\nu|\leq 2^{k+1}}   \Vert m \Vert_{l.u., H^s} 2^{-k(s-\frac{n}{2})}  \Vert \phi_\nu \Vert_{\textnormal{BMO}}\Vert \phi_\nu\Vert_{L^1}\Vert f \Vert_{L^\infty}.
\end{align*}
Thus, the analysis above implies the following estimate for the operator norm of $T_{m(k)},$ for all $k\geq 2,$
\begin{align*}
    \Vert T_{m(k)} \Vert_{ \mathscr{B}(L^\infty(\mathbb{R}^n),\textnormal{BMO}(\mathbb{R}^n)) }
    \lesssim \sum_{2^{k-1}\leq |\nu|\leq 2^{k+1}}   \Vert m \Vert_{l.u., H^s} 2^{-k(s-\frac{n}{2})}  \Vert \phi_\nu \Vert_{\textnormal{BMO}}\Vert \phi_\nu\Vert_{L^1}.
\end{align*}
By using Lemma 2.2 of \cite{CardonaRuzhansky2018} we have $\Vert\phi_\nu \Vert_{L^1(\mathbb{R}^n)}\lesssim |\nu|^{\frac{n}{4}}.$ Additionally, the inequality \eqref{varkappa}: $$\Vert\phi_\nu \Vert_{\textnormal{BMO}}\lesssim |\nu|^{\varkappa},$$ implies that
\begin{align*}
   \Vert T_{m(k)} \Vert_{ \mathscr{B}(L^\infty(\mathbb{R}^n),\textnormal{BMO}(\mathbb{R}^n)) }
    &\lesssim \sum_{2^{k-1}\leq |\nu|\leq 2^{k+1}}  2^{k(\frac{n}{4}+\varkappa) }\times  \Vert m\Vert_{l.u.H^s}\times 2^{-k(s-\frac{n}{2})}\\
   & \asymp 2^{kn}\times2^{k(\frac{n}{4}+\varkappa)}\times  \Vert m\Vert_{l.u.H^s}\times 2^{-k(s-\frac{n}{2})}.
\end{align*}
Now, by using that $T_0$ is a pseudo-multiplier whose symbol has compact support in the $\nu$-variables, we conclude that $T_0$ is bounded from $L^\infty(\mathbb{R}^n)$ to $\textnormal{BMO}(\mathbb{R}^n)$ and  $$\Vert T_0 \Vert_{\mathscr{B}(L^\infty(\mathbb{R}^n)),\textnormal{BMO}(\mathbb{R}^n)}\leq C\Vert m\Vert_{L^\infty}.$$ This analysis, allows us to estimate, the operator norm of $T_m$ as follows,
\begin{align*}
\Vert T_m \Vert_{\mathscr{B}(L^\infty(\mathbb{R}^n),\textnormal{BMO}(\mathbb{R}^n))}  &\leq \Vert T_0 \Vert_{\mathscr{B}(L^\infty(\mathbb{R}^n)),\textnormal{BMO}(\mathbb{R}^n)} +\sum_k  \Vert T_{m(k)} \Vert_{\mathscr{B}(L^\infty(\mathbb{R}^n)),\textnormal{BMO}(\mathbb{R}^n)} \\
&\lesssim \Vert m\Vert_{L^\infty}+\sum_{k=1}^{\infty}2^{-k(s-\frac{7n}{4} -\varkappa)}  \Vert m\Vert_{l.u.H^s}\\
&\leq C(\Vert m\Vert_{L^\infty}+\Vert m\Vert_{l.u.H^s})<\infty,
\end{align*}
provided that $s>\frac{7n}{4} +\varkappa,$ for some $\varkappa\leq -1/12.$ So, we have proved the $L^\infty$-$\textnormal{BMO}$ boundedness of $T_m.$ In order to end the proof we only need to prove that, under the condition $\textnormal{(CII)},$ the operator $T_m$ is bounded from $L^\infty(\mathbb{R}^n)$ to $\textnormal{BMO}(\mathbb{R}^n).$ But, if $m$
satisfies $\textnormal{(CII)},$ then it also does to satisfy $\textnormal{(CI)},$ in view of  the inequality,
\begin{equation}
    \Vert m \Vert_{l.u. H^s} \lesssim \sup_{|\alpha|\leq [7n/4-1/12]+1}(1+|\nu|)^{|\alpha|}\sup_{x,\nu}|\Delta^\alpha m(x,\nu)|,
\end{equation} for $s>0$ satisfying, 
$\frac{7n}{4} -\frac{1}{12}<s<[7n/4-1/12]+1,$ (see Eq. (2.29) of \cite{CardonaRuzhansky2018}).

\begin{remark}According to the proof of Theorem \ref{maintheorem}, 
if $T_m$ satisfies the condition $\textnormal{(CI)},$ then we have 
\begin{equation}
\Vert T_m\Vert_{\mathscr{B}(L^\infty(\mathbb{R}^n),\textnormal{BMO}(\mathbb{R}^n))}\leq C(\Vert m\Vert_{L^\infty}+\Vert m\Vert_{l.u.H^s}).
\end{equation} On the other hand, if we assume \textnormal{(CII)}, the operator norm of $T_m$ satisfies
\begin{equation}
\Vert T_m\Vert_{\mathscr{B}(L^\infty(\mathbb{R}^n),\textnormal{BMO}(\mathbb{R}^n))}\leq C\sup_{|\alpha|\leq [7n/4-1/12]+1  }(1+|\nu|)^{|\alpha|}\sup_{x,\nu}|\Delta^\alpha m(x,\nu)|.
\end{equation}
\end{remark}
\end{proof}
\noindent {\bf{Acknowledgements. }} I would like to thanks Professor Michael  Ruzhansky for several discussions on the subject.

\bibliographystyle{amsplain}

\end{document}